\newcommand{\mat}{\begin{pmatrix}}
\newcommand{\emat}{\end{pmatrix}}
\renewcommand{\t}{\tau}
\renewcommand{\a}{\alpha}
\renewcommand{\b}{\beta}
\renewcommand{\i}{\infty}
\newcommand{\G}{\Gamma}
\newcommand{\g}{\gamma}
\newcommand{\C}{\mathbb C}
\renewcommand{\H}{\mathbb H}
\newtheorem{thm}{Theorem}
\newtheorem{lem}[thm]{Lemma}
\newtheorem{cor}[thm]{Corollary}
\newtheorem{prop}[thm]{Proposition}
\theoremstyle{definition}
\numberwithin{equation}{section}
\numberwithin{thm}{section}
\begin{document}

\title[Divisibility Properties of the Fourier Coefficients of (Mock) Modular Functions]{Divisibility Properties of the Fourier Coefficients of (Mock) Modular Functions and Ramanujan}

\author{Soon-Yi Kang}
\address{Department of Mathematics, Kangwon National University, Chuncheon, 24341, Korea} \email{sy2kang@kangwon.ac.kr}

\dedicatory{Dedicated to my teacher Bruce C. Berndt on his 80th Birthday.}

\begin{abstract} We survey divisibility properties of the Fourier coefficients of modular functions inspired by Ramanujan. Then using recent results of the generalized Hecke operator on harmonic Maass functions and known divisibility of Fourier coefficients of modular functions, we establish congruence relations of the Fourier coefficients of certain modular functions and mock modular functions of various levels.
\end{abstract}
\maketitle

\renewcommand{\thefootnote}%
             {}
 {\footnotetext{
 2010 {\it Mathematics Subject Classification}: 11F03, 11F12, 11F25, 11F30, 11F33
 \par
 {\it Keywords}: congruence, Fourier coefficients, Hecke operator, modular function, mock modular form, Ramanujan
}}

\section{Introduction}

Fourier coefficients of modular forms have rich arithmetic properties and give a wide range of applications. It is in the work of S.~Ramanujan where we find important such examples and the rudiment of the arithmetic theory of modular forms. 

In his paper \cite{R19, R20, R21}, Ramanujan stated and proved his three famous congruences for the partition function $p(n)$, namely,
\begin{eqnarray}
p(5n + 4) \equiv 0 & \pmod 5,\\
p(7n + 5) \equiv 0 &\pmod 7, \\
p(11n + 6)\equiv 0 &\pmod{11}, \label{R11}
\end{eqnarray}
where $n$ is any non-negative integer.  The partition function is closely related with the $\t$-function that was introduced by Ramanujan in his important paper \cite{R16} 
via
$$\sum_{n=1}^\i \t(n)q^n=q\prod_{n=1}^\i (1-q^n)^{24}=\left(\prod_{n=1}^\i(1-q^n)^{25}\right)\sum_{n=0}^\i p(n)q^{n+1}.$$
He proved a congruence for $\t(n)$ as well:
\begin{equation}
\t(n)\equiv \sigma_{11}(n)\pmod{691},
\end{equation}
where $\sigma_k(n)$ is the sum of $k$th powers of divisors of $n$. Ramanujan's observations on $\t(n)$ such as
$$\t(p^{\a+2})=\t(p)\t(p^{\a+1})-p^{11}\t(p^\a)$$
 for prime $p$ and $\a\geq 0$ led to the development of Hecke theory.
The proofs of (1.1)--(1.3) in \cite{R21} employ Eisenstein series.  In \cite{Berndt2007}, B.~C.~Berndt also uses Eisenstein series and gives simple and unified proofs for (1.1)--(1.3).
In fact, the proofs in \cite{R21} were extracted from an unpublished manuscript
of Ramanujan on $p(n)$ and $\t (n)$ by G.~H.~Hardy after Ramanujan died in 1920. 
Berndt and K.~Ono \cite{BerO} edited this manuscript with an extensive commentary and references to the literature.

Ramanujan \cite{R19} conjectured that
there are further congruences for $p(n)$ modulo powers of the primes $5$, $7$, and $11$ and   
left a sketch of proofs of the conjecture for all powers of 5 and 7 in his manuscript \cite{BerO}.
G.~N.~Watson \cite{Wat} completed the proofs. The case for all powers of $11$ was proved by  A.~O.~L. Atkin \cite{Atkin1967}, who
followed J. Lehner's approach developed in \cite{Lehner1943, Lehner1950}.
For more history and discoveries on congruences  for $p(n)$ including two groundbreaking work by Atkin \cite{Atkin1968} and S. Ahlgren and Ono \cite{AhlO}, see \cite{Berndt2000, Ono04, Ono08, PR}. 

It is too extensive to discuss all divisibility properties of Fourier coefficients of modular forms inspired by Ramanujan. We rather focus on the divisibility of those relating with modular functions. 
It was Lehner who started investigating the divisibility properties of the Fourier coefficients of a modular function, in particular, the modular $j$-invariant. For $\t$ with $\textrm{Im}\t>0$ and $q=\exp(2\pi i \t)$, the classical modular $j$-invariant
$j(\t)=q^{-1}+744+\sum_{n=1}^\infty b(n)q^n$
is a modular function on the full modular group, that is, a weight $0$ weakly holomorphic modular form of level $1$.  The Fourier coefficients of this fundamental function in modular form theory and number theory are now known as the graded dimensions of the Monster module. Expanding the methods used in his proofs of Ramanujan's partition congruences, Lehner showed in \cite{Lehner1949a} that
\begin{eqnarray}
 &b(5^{\a+1} n) \equiv 0 & \pmod{5^\a},\\ 
 &b(7^\a n) \equiv 0 & \pmod{7^\a}, \\
 &b(11^2 n) \equiv 0 & \pmod{11^2},  \label{l11}
\end{eqnarray}for any positive integers $n$ and $\a$.
In \cite{Atkin1967}, Atkin generalized \eqref{l11} to all positive powers $\a$:
\begin{equation}\label{j11}b(11^\a n) \equiv 0  \pmod{11^\a}.\end{equation} 
As he claimed in \cite[p.22]{Atkin1967}, congruence in \eqref{j11} is best possible in the sense that $b(11^\a) \not\equiv 0  \pmod{11^{\a+1}}.$
Lehner \cite{Lehner1949} further showed that the coefficients $b(n)$ are highly divisible by small primes dividing $n$ by proving that
\begin{equation}\label{small}
b(2^\a 3^\b 5^\g 7^\delta  n) \equiv 0  \pmod{2^{3\a+8} 3^{2\b+3} 5^{\g+1} 7^\delta}
\end{equation} for any positive integers $\a$, $\b$, $\g$, and $\delta$.  He also claimed that the Fourier coefficients of certain modular functions of higher level also satisfy the same divisibility properties.  
Recently, these results have been generalized to a modular function of level of genus zero. M.~Griffin \cite{Griffin} showed that similar congruences with \eqref{small} hold for all elements of the canonical basis $j_m(\t):=q^{-m} + O(q)$ for the space of weakly holomorphic modular functions on the full modular group. Note that $j_0(\t)=1$ and $j_1(\t)=j(\t)-744$.  Applying \eqref{j11} into the method used in \cite{Griffin}, one can easily obtain a prime 11 analogue to the results for smaller primes in \cite{Griffin}. 

\begin{thm}\label{tb11} Let $j_{m}(\t)=q^{-m}+\sum_{n=1}^\i b(m,n)q^n$ be an element of the canonical  basis for the space of weakly holomorphic modular functions on the full modular group. If $m=11^\a m'$ for a non-negative integer $\a$ and $(11,m')=1$, then for any positive integers $n$ and $\b>\a$,
\begin{equation}\label{eb11}
b(11^{\a} m',11^{\b} n)\equiv 0 \pmod{11^{(\b-\a)}}.
\end{equation}
\end{thm}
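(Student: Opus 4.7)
The plan is to combine Atkin's congruence \eqref{j11} with the standard action of the weight $0$ Hecke operator $T_{11}$ on the canonical basis, and then induct on $\alpha$. First I would establish the Hecke identity
\[
T_p\, j_m \;=\; j_{pm} + \delta_{p\mid m}\, j_{m/p}
\]
for any prime $p$ and $m\geq 1$. Writing $T_p = V_p + U_p$ on $q$-expansions, a short computation shows that $T_p j_m$ has principal part $q^{-pm}$ (plus $q^{-m/p}$ when $p\mid m$) and vanishing constant term, so uniqueness of the canonical basis forces this identity. Specializing to $p=11$, $m = 11^\alpha m'$, and extracting the coefficient of $q^{11^\beta n}$ produces the recursion
\[
b(11^{\alpha+1}m', 11^\beta n) = b(11^\alpha m', 11^{\beta+1}n) + b(11^\alpha m', 11^{\beta-1}n) - \delta_{\alpha\geq 1}\, b(11^{\alpha-1}m', 11^\beta n),
\]
valid for $\alpha \geq 0$ and $\beta \geq 1$.

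For the base case $\alpha = 0$, I would factor $m' = \prod p_i^{a_i}$ (every $p_i \ne 11$) and iterate the Hecke identity to express $j_{m'}$ as an integer polynomial in the commuting operators $T_{p_i}$ applied to $j_1$. Extracting the coefficient of $q^{11^\beta n}$ from this expression, each application of $T_{p_i}$ only multiplies or divides the Fourier index by $p_i \ne 11$, so every index that appears retains $11$-adic valuation exactly $\beta$. The coefficient of $q^{11^\beta n}$ in $j_{m'}$ is therefore a $\Z$-linear combination of values $b(1, M)$ with $v_{11}(M) = \beta$, and Atkin's congruence \eqref{j11} yields $11^\beta \mid b(1, M)$ for each such $M$, hence $11^\beta \mid b(m', 11^\beta n)$.

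The inductive step is then routine: applying the hypothesis at levels $\alpha-1$ and $\alpha$ to the three terms on the right side of the recursion (two when $\alpha=0$) gives $11$-adic valuations at least $\beta - \alpha + 1$, $\beta - \alpha - 1$, and $\beta - \alpha + 1$, whose minimum is $\beta - (\alpha + 1)$, exactly what is required at level $\alpha + 1$ in the range $\beta \geq \alpha + 2$ of the theorem's statement. The main obstacle will be the base case: lifting Atkin's theorem from $j_1$ alone to every $j_{m'}$ with $(11, m') = 1$. The key observation making this go through cleanly is that every Hecke operator $T_p$ with $p \ne 11$ preserves the $11$-adic valuation of the Fourier indices it mixes, so Atkin's divisibility for $j_1$ transfers to every such $j_{m'}$ without any loss.
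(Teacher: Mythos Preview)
Your overall strategy matches the paper's: reduce to Atkin's congruence \eqref{j11} for $j_1$ via Hecke operators. The paper does this a bit more directly, using the closed formula $b(mk,n)=\sum_{d\mid (k,n)}\tfrac{k}{d}\,b(m,kn/d^2)$ for $(k,m)=1$, first with $k=m'$ (base case) and then with $k=11^\alpha$ in one shot rather than inducting on $\alpha$. But the idea is the same.

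There is, however, a genuine error in your Hecke identity. The combination $U_p+V_p$ does \emph{not} preserve $M_0^!(\mathrm{SL}_2(\Z))$: for instance $(U_2+V_2)j_1=q^{-2}+21493760\,q+\cdots$, whereas $j_2=q^{-2}+42987520\,q+\cdots$. So your principal-part argument (``uniqueness of the canonical basis forces this identity'') is invalid --- the left side is not level~$1$, and hence not determined by its principal part. The level-$1$ Hecke operator in weight $0$ is $T_p=pU_p+V_p$, and the correct identity is
\[
T_p\,j_m \;=\; j_{pm} \;+\; p\,\delta_{p\mid m}\,j_{m/p}.
\]
Consequently your recursion should read
\[
b(11^{\alpha+1}m',11^\beta n)\;=\;11\,b(11^\alpha m',11^{\beta+1}n)\;+\;b(11^\alpha m',11^{\beta-1}n)\;-\;11\,\delta_{\alpha\ge 1}\,b(11^{\alpha-1}m',11^\beta n).
\]
Fortunately the correction only strengthens your estimates: the extra factors of $11$ raise the valuations of the first and third terms, so the bottleneck is still the middle term with valuation $\beta-\alpha-1$, and your induction goes through unchanged. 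Likewise, the base case survives: with the correct $T_p=pU_p+V_p$ one still has $j_{m'}=\prod_i P_{a_i}(T_{p_i})\,j_1$ for integer polynomials $P_{a_i}$, and each $T_{p_i}$ with $p_i\ne 11$ still preserves the $11$-adic valuation of the Fourier indices it mixes.
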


N.~Anderson and P.~Jenkins \cite{AJ} generalized Lehner's claim on modular functions of higher level by proving the following theorem:
\begin{thm}\cite[Theorem 2]{AJ}\label{ajp} Let $p\in\{2,3,5,7\}$, and let
$$f_{p,m}(\t)=q^{-m}+\sum_{n=0}^\i {b_{p}(m,n)q^n}$$
be an element of the canonical basis for the space of level $p$ modular functions which are holomorphic at $0$. If $m=p^\a m'$ for a non-negative integer $\a$ and $(m',p)=1$, then for any positive integer $n$ and $\b>\a$,
\begin{eqnarray}\label{eajp}
\begin{split}
 &b_2(2^{\a}m',2^\b n) \equiv 0 & \pmod{2^{3(\b-\a)+8}},\\ 
 &b_3(3^{\a}m',3^\b n) \equiv 0 & \pmod{3^{2(\b-\a)+3}},\\
 &b_5(5^{\a}m',5^\b n) \equiv 0 & \pmod{5^{(\b-\a)+1}},\\ 
 &b_7(7^{\a}m',7^\b n) \equiv 0 & \pmod{7^{(\b-\a)}}.
 \end{split}
\end{eqnarray}
\end{thm}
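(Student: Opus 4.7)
The plan is to extend Griffin's generating-function method from level $1$ to the genus-zero primes $p \in \{2,3,5,7\}$. For each such $p$, fix a Hauptmodul $\psi_p$ for $\G_0(p)$ normalized so that $\psi_p = q^{-1} + O(1)$ at $\i$ and $\psi_p$ is holomorphic at $0$; then each $f_{p,m}$ is the unique monic Faber polynomial of degree $m$ in $\psi_p$ lying in the canonical basis. From this one derives a generating-function identity of the shape
$$\sum_{m \geq 0} f_{p,m}(z)\, q^m \;=\; \frac{H_p(\t)}{\psi_p(\t)-\psi_p(z)},$$
where $H_p$ is an explicit weight-$0$ modular function on $\G_0(p)$ calibrated to match the cuspidal behaviour on both sides. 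Reading off the coefficient of $q^n$ on the right expresses $b_p(m,n)$ as a Fourier coefficient of $H_p(\t)\,\psi_p(\t)^{-m-1}$, turning the target congruence into a divisibility statement for the coefficients of a specific level-$p$ eta quotient.

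The next step is a Hecke-operator recursion. Applying the level-$p$ operator $U_p$ (and, where useful, the Fricke involution $W_p$) to the generating function, one verifies that $f_{p,m}\,|\,U_p$ equals $f_{p,m/p}$ whenever $p\mid m$, up to a $\Z$-linear combination of basis elements whose coefficients are divisible by $p^{c_p}$, where $(c_p)_{p=2,3,5,7}=(3,2,1,1)$. Iterating $U_p$ a total of $\b-\a$ times --- turning a coefficient at index $p^\b n$ into one at index $p^\a n$ --- therefore produces a factor of $p^{c_p(\b-\a)}$. An induction on $\b-\a$, with base case $\b-\a=1$ supplied by Lehner-type identities at level $p$, then delivers the main divisibility up to a per-prime additive offset $d_p\in\{8,3,1,0\}$ coming from the initial congruence for $H_p$ (essentially the weight-$2$ form $E_2(\t)-pE_2(p\t)$ or an associated eta quotient).

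The main obstacle will be calibrating the offsets $d_p$. The recursion delivers the linear-in-$(\b-\a)$ part uniformly across the four primes, but obtaining the extra $p^{d_p}$ requires a careful case-by-case examination of the low-order Fourier coefficients of the $\eta(\t)^a\eta(p\t)^b$-type quotients realizing $H_p$. The large offsets $+8$ and $+3$ for $p=2,3$ in particular reflect substantial divisibility of specific eta-quotient coefficients by powers of $2$ and $3$, and these must be extracted by explicit computation in the spirit of Lehner's original arguments leading to \eqref{small}. Once these base-level divisibilities are secured, the induction cleanly yields \eqref{eajp}.
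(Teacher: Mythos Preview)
The present paper does not contain a proof of this theorem: it is quoted verbatim as \cite[Theorem~2]{AJ} and used as a known input, so there is no ``paper's own proof'' to compare your proposal against. The only closely related argument actually carried out here is the proof of Theorem~\ref{tf11} in Section~4, which treats the level-$11$ analogue by writing $U_{11}f_{11,m}$ in Atkin's basis $\{G_r,\,B,\,g_r\}$ and iterating the $U$-operator via Lemma~\ref{A28}; the author explicitly notes that this method follows \cite{AJ, Atkin1967, Lehner1949a, Lehner1949}.

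Your outline is in the same spirit as the Anderson--Jenkins argument it would need to reproduce, but as written it is only a plan, not a proof. The key technical claim --- that $f_{p,m}\,|\,U_p$ equals $f_{p,m/p}$ plus a $\Z$-linear combination of basis elements with coefficients divisible by $p^{c_p}$ --- is asserted rather than established, and the generating-function identity you write down (with an unspecified $H_p$) is not the standard Faber-polynomial relation and would need to be derived and its $q$-expansion analyzed. Moreover, the offsets $d_p\in\{8,3,1,0\}$ are precisely the content of Lehner's original computations, so ``case-by-case examination'' here means redoing substantial work from \cite{Lehner1949a, Lehner1949}. If you intend to give an independent proof, you should consult \cite{AJ} directly, since the paper under review simply imports the result.
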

Divisibility properties of Fourier coefficients of modular functions of level of genus zero are further investigated in the series of the papers by Jenkins and his collaborators in \cite{IJW, JT15, JT18}.
They showed that many of the Fourier coefficients of the basis elements for the space of modular functions of level $N$ of genus zero are divisible by high powers of the prime dividing the level $N$.

Unlike modular curves of level $2$, $3,$ $5$ and $7$, the curve of level $11$ is of genus $1$ not of genus $0$. Hence there is no Hauptmodul on this curve. In order to prove  congruence relations relating with $11$, Atkin and Lehner, each found a pair of two generators to construct a basis for the space of modular functions of level $11$.   In \cite{JKK-hecke}, the author with D.~Jeon and C.~H.~Kim constructed reduced form bases for the spaces of the modular functions that are holomorphic away from the cusp at $i\i$ for all levels. In particular, each basis element function with level $N$ of genus 1 is given by
\begin{equation}\label{fN}
\begin{cases}
f_{N,0}(\t)=1,\\
f_{N,m}(\t)=q^{-m}+a_{N}(m,-1)q^{-1}+\sum_{n=1}^\i {a_{N}(m,n)q^n},\quad (m\geq 2)
\end{cases}
\end{equation}
with integer coefficients.  Using Atkin's basis and its properties discovered in \cite{Atkin1967}, we establish a divisibility property of the Fourier coefficients of $f_{11,m}(\t)$ modulo powers of $11$: 
\begin{thm}\label{tf11} Let $f_{11,m}(\t)$ be an element of the basis for the space of level $11$ modular functions which are holomorphic at $0$ with the Fourier expansion in \eqref{fN}.
If $m=11^\a m' \geq 2$ for a non-negative integer $\a$ and $(11,m')=1$, then for any positive integers $n$ and $\b>\a$, 
\begin{equation}\label{e11}
a_{11}(11^\a m',11^{\b} n)\equiv 0 \pmod{11^{(\b-\a)}}.
\end{equation}
\end{thm}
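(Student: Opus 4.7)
The approach is to adapt the Hecke-operator arguments of Griffin \cite{Griffin} and Anderson--Jenkins \cite{AJ} to the level $11$ setting, substituting Atkin's construction and his sharpened congruence \eqref{j11} from \cite{Atkin1967} for the Hauptmoduln available at levels $2,3,5,7$. The overall structure is an induction on $\b-\a$ driven by the action of the operator $U_{11}$ on the space of modular functions of level $11$ that are holomorphic at the cusp $0$.

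First, I would recall Atkin's pair of generators for the function field of $X_0(11)$ and the associated basis for the space of weakly holomorphic modular functions that are holomorphic at $0$. Together with \eqref{j11} and its level-$11$ refinements proved in \cite{Atkin1967}, this provides an ambient basis in which both the $U_{11}$-action and the $11$-adic divisibility of coefficients are transparent, giving the base-case ingredients.

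Second, I would express each reduced-form basis element $f_{11,m}(\t)$ from \eqref{fN} as an explicit $\Z$-linear combination of Atkin's basis elements. Since $X_0(11)$ has genus $1$, the change of basis must account for the unavoidable $q^{-1}$ contribution $a_{11}(m,-1)q^{-1}$; tracking this coefficient both through the change of basis and through subsequent applications of $U_{11}$ will be the central new bookkeeping. Once the passage between the two bases is controlled, the congruence for $f_{11,m}|U_{11}$ reduces to Atkin's congruence applied term by term.

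Third, I would induct on $\b-\a$. The base case $\b=\a+1$ follows by re-expanding $f_{11,m}|U_{11}$ in the basis and invoking Atkin's congruence on each summand. For the inductive step, applying $U_{11}$ once more and re-expanding in the basis picks up one additional factor of $11$ per iteration, yielding $11^{\b-\a}$ after $\b-\a$ steps. The main obstacle will be handling the two-dimensional nature of Atkin's basis forced by $X_0(11)$ having genus $1$, together with the interaction between the $q^{-1}$ term and the $U_{11}$-action: neither phenomenon occurs in the level-$1$ setting of \cite{Griffin} or the level-$p$ ($p\in\{2,3,5,7\}$) setting of \cite{AJ}, and controlling their combined effect while preserving integrality and the sharp $11$-adic valuation is the genuinely new part of the argument.
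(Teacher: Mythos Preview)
Your proposal is correct and follows essentially the same route as the paper: Atkin's level-$11$ basis from \cite{Atkin1967}, iteration of $U_{11}$, and induction on the exponent. One refinement worth noting: in the paper the key expansion is not of $f_{11,m}$ in Atkin's $G_r$-basis (holomorphic at $0$), but of $11\,U_{11}f_{11,m}-11f_{11,m/11}$ in the dual basis $g_r$ (holomorphic at $\infty$) together with Atkin's auxiliary function $B(\tau)$ carrying the simple pole at each cusp; the $q^{-1}$ bookkeeping you anticipate is thereby absorbed into $B$ (using $U_{11}B+5\in\mathcal S$), and the inductive gain of one power of $11$ per step is exactly Atkin's lemma that the class $\mathcal S=\{\sum_n\lambda_n 11^{\xi(n)}g_n\}$ is stable under $11^{-1}U_{11}$.
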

According to numerical experiments, such beautiful and simple congruences appearing in \eqref{small} through \eqref{e11} do not seem to hold for levels $17$ or $19$.  However, a very general congruence relation of Fourier coefficients of modular functions of level of positive genus is found in \cite[Theorem 1.11]{JKK-hecke}. We state only its genus 1 version here: 
\begin{thm}\label{Tcong}
Let $N\in\{11,14,15,17,19,20,21,24,27,32,36,49\}$. Then for any prime $p$ and positive integers $\a$, $m$ and $n$ with $m$, $n\not\equiv 0\pmod p$, we have
\begin{equation}\label{eqcong}a_{N}(p^\a m,n)+a_{N}(m,-1)a_{N}(p^\a,n)\equiv 0\pmod{p^\a}.\end{equation}
\end{thm}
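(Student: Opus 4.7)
The plan is to apply a suitably normalized Hecke-type operator $T_{p^\a}$ to the basis element $f_{N,m}(\t)$ and then expand the image in the basis \eqref{fN} using the genus-$1$ structure of $X_0(N)$. Assuming first that $(p,N)=1$, let $T_{p^\a}$ denote the weight-$0$ Hecke operator normalized so that its action on a Fourier expansion $g=\sum c(k)q^k$ is
$$
(T_{p^\a} g)(n) \;=\; \sum_{d\mid \gcd(n,p^\a)} \frac{p^\a}{d}\, c\!\left(\frac{np^\a}{d^2}\right).
$$
In particular, $(T_{p^\a} g)(n)=p^\a\, c(np^\a)$ whenever $(n,p)=1$, so $T_{p^\a}$ preserves integrality.

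Applying $T_{p^\a}$ to $g=f_{N,m}$ and writing an arbitrary negative index as $n=-p^\b n'$ with $(n',p)=1$ and $d=p^e$ for $0\leq e\leq\min(\a,\b)$, the only nonzero Fourier coefficients of $f_{N,m}$ in negative indices lie at $-m$ and $-1$; combined with $(m,p)=1$, the condition $n'\,p^{\a+\b-2e}\in\{1,m\}$ forces $\a=\b=e$ and $n'\in\{1,m\}$. Hence the principal part of $T_{p^\a} f_{N,m}$ is exactly
$$
q^{-p^\a m}\;+\;a_{N}(m,-1)\,q^{-p^\a},
$$
with no other negative-power terms; in particular, the $q^{-1}$ coefficient vanishes.

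Since $T_{p^\a} f_{N,m}$ is a modular function on $\G_0(N)$ holomorphic away from $i\i$, it admits a unique expansion in the basis $\{1,\,f_{N,k}:k\geq 2\}$. Matching $q^{-k}$ coefficients in decreasing order forces the coefficient of $f_{N,p^\a m}$ to be $1$, the coefficient of $f_{N,p^\a}$ to be $a_N(m,-1)$, and all intermediate $f_{N,k}$ to have coefficient $0$. Because $X_0(N)$ has genus $1$ there is no basis function ``$f_{N,1}$'' available, so matching the $q^{-1}$ term is an automatic consistency condition, forcing the auxiliary identity $a_N(p^\a m,-1)+a_N(m,-1)\,a_N(p^\a,-1)=0$. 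Therefore
$$
T_{p^\a} f_{N,m}\;=\;f_{N,p^\a m}\;+\;a_N(m,-1)\,f_{N,p^\a}\;+\;C
$$
for some constant $C$, and reading off the $q^n$ coefficient on both sides for $n\geq 1$ with $(n,p)=1$ gives the \emph{exact} integer identity
$$
p^\a\,a_N(m,p^\a n)\;=\;a_N(p^\a m,n)\;+\;a_N(m,-1)\,a_N(p^\a,n),
$$
whose reduction modulo $p^\a$ is \eqref{eqcong}.

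For $p\mid N$ the classical $T_{p^\a}$ must be replaced by the generalized Hecke operator on level-$N$ (harmonic Maass) modular functions from \cite{JKK-hecke}. Three things must hold: this operator sends modular functions on $\G_0(N)$ holomorphic away from $i\i$ into themselves, still produces the same two-term principal part on $f_{N,m}$, and still scales Fourier coefficients at indices coprime to $p$ by $p^\a$. Verifying these in the bad-prime case is the main technical obstacle, since for $p\mid N$ a naive $U_p$-type operator can \emph{a priori} introduce spurious poles at the other cusps of $X_0(N)$; ruling this out is exactly what the generalized-Hecke-operator machinery of \cite{JKK-hecke} is designed to accomplish. Once that is in hand, the principal-part/basis-expansion argument above carries over verbatim.
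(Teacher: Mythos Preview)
Your argument for the case $(p,N)=1$ is correct and complete: the classical Hecke operator $T_{p^\a}$ preserves $M_0^{!,\infty}(N)$, your principal-part computation is right, and the genus-$1$ Riemann--Roch obstruction (no nonconstant function in $M_0^{!,\infty}(N)$ with at most a simple pole at $i\infty$) forces the $q^{-1}$ consistency identity automatically, yielding the exact integer relation
\[
p^\a\,a_N(m,p^\a n)=a_N(p^\a m,n)+a_N(m,-1)\,a_N(p^\a,n).
\]

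Note, however, that the present paper does not itself prove Theorem~\ref{Tcong}; it is quoted from \cite[Theorem~1.11]{JKK-hecke}, and the only indication of method here is the remark in Section~3 that it follows from ``properties of the generalized Hecke operator'' satisfying \eqref{hjN}. Your write-up is therefore more explicit than what the paper provides. The underlying idea is the same --- act by a Hecke-type operator on a basis element, identify the image in the reduced basis $\{f_{N,k}\}$ via its principal part, and read off Fourier coefficients --- but \cite{JKK-hecke} carries this out uniformly through the harmonic Maass functions $j_{N,m}$ and the relation \eqref{fj}, which is precisely what handles the bad-prime case $p\mid N$. Your deferral on that case is appropriate and accurately diagnoses the obstruction (a naive $U_p$ can move poles to other cusps of $X_0(N)$); resolving it is the content of the cited machinery rather than something one can supply ad hoc.
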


Ramanujan's mock theta functions are holomorphic parts of harmonic Maass forms. In this spirit and following Zagier, we call the holomorphic part of a harmonic Maass function (i.e., a weight zero harmonic Maass form) by a mock modular function.
For each level $N$, there are certain analytic continuations of Niebur-Poincar\'e series which form a basis for the space of harmonic Maass functions of level $N$. We denote these basis elements by $j_{N,m}$. These are considered to be generalizations of the $j_m$ functions, because $j_{1,m}(\t)=j_m(\t)+24\sigma_1(m)$ for any non-negative integer $m$ \cite[p.99]{JKK-sesqui}. 
The harmonic Maass function $j_{N,m}$ is decomposed into  (\cite[Theorem 1]{Niebur}, \cite[Proposition 3.1]{BKLOR})
\begin{equation}\label{jnnfour}
j_{N,m}(\t)=j^{h}_{N,m}(\t)+j^{nh}_{N,m}(\t),
\end{equation}
where the Fourier expansion of the mock modular function $j^h_{N,m}$ is given by 
\begin{equation}\label{mockj}j^h_{N,m}(\t) =q^{-m}+c_N(m,0)+\sum_{n=1}^\infty c_N(m,n)q^n
\end{equation} and its corresponding non-holomorphic part $j^{nh}_{N,m}$ can be written as
$$j^{nh}_{N,m}(\t)=-\overline{q}^{m}+\sum_{n=1}^\i c_N(m,-n)\overline{q}^n.$$
The functions $j_m$ form a Hecke system, that is, for the normalized Hecke operator $T(m)$, they satisfy that
\begin{equation}\label{hj}j_m(\t)=j_1|T(m)(\t).
\end{equation}
In \cite{JKK-hecke}, a Hecke system for $j_{N,m}$ is constructed with a suitable generalization of the Hecke operator in \eqref{hj}.  By abuse of notation, we may write for any positive integers $N$ and $m$,  \cite[Corollary 1.4]{JKK-hecke}
\begin{equation}\label{hjN}
j_{N,m}(\t)=j_{N,1}|T(m)(\t).
\end{equation}
Using this property and its applications, one can obtain several congruences for the Fourier coefficients of mock modular function $j^h_{N,m}$.

\begin{thm}\label{pnn} Let $p$ be a prime and $c_p(m,n)$ be the Fourier coefficients of $j^h_{p,m}(\t)$, the mock modular function of level $p$ given in \eqref{mockj}. Then  
\begin{enumerate}
\item $pc_{p}(m,pn)-c_p(pm,n)\in \mathbb Z,$
\item if $p\nmid n$ and $c_p(m,pn)\in \mathbb Z$, then 
$c_{p}(pm,n)\equiv 0 \pmod p.$
\end{enumerate}
\end{thm}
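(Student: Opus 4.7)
The plan is to combine the Hecke system identity \eqref{hjN}, $j_{p,m}=j_{p,1}|T(m)$, with the classical prime-$p$ multiplication rule for normalized Hecke operators,
\[
T(p)T(m)=T(pm)+p\,\mathbf{1}[p\mid m]\,T(m/p),
\]
which the generalized Hecke operator of \cite{JKK-hecke} inherits. Composing with \eqref{hjN} yields the identity of harmonic Maass functions
\[
T(p)\,j_{p,m}=j_{p,pm}+p\,\mathbf{1}[p\mid m]\,j_{p,m/p}.
\]

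Next I would extract the $n$-th holomorphic Fourier coefficient of both sides. Each non-holomorphic piece $j^{nh}_{p,k}=-\overline{q}^{\,k}+\sum_{n\geq 1}c_p(k,-n)\overline{q}^{\,n}$ is a pure $\overline{q}$-series, and the weight-zero slash action $f(\t)\mapsto f((a\t+b)/d)$ sends $\overline{q}^{\,k}$ to a scalar multiple of $\overline{q}^{\,ka/d}$, so $T(p)$ preserves the holomorphic/anti-holomorphic splitting. Applying the standard formula $(T(p)f)_n=p\,c(pn)+\mathbf{1}[p\mid n]\,c(n/p)$ to the $q$-expansion of $j^h_{p,m}$ and matching against the holomorphic part of the right-hand side, for every $n\geq 1$ we obtain the key relation
\[
p\,c_p(m,pn)+\mathbf{1}[p\mid n]\,c_p(m,n/p)=c_p(pm,n)+p\,\mathbf{1}[p\mid m]\,c_p(m/p,n). \quad (\star)
\]

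Rearranging $(\star)$ recasts part (1) as the assertion
\[
p\,c_p(m,pn)-c_p(pm,n)=p\,\mathbf{1}[p\mid m]\,c_p(m/p,n)-\mathbf{1}[p\mid n]\,c_p(m,n/p)\in\Z,
\]
and part (2) as a direct consequence: specializing $(\star)$ to $p\nmid n$ kills the $\mathbf{1}[p\mid n]$ term and gives $c_p(pm,n)=p\bigl(c_p(m,pn)-\mathbf{1}[p\mid m]\,c_p(m/p,n)\bigr)$, so that once the integrality in (1) is secured and $c_p(m,pn)\in\Z$ is given, the bracketed expression is an integer and $c_p(pm,n)\equiv 0\pmod p$ follows.

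The main obstacle is establishing (1), because the individual mock coefficients $c_p(m,k)$ are generically transcendental Kloosterman--Bessel sums from the Niebur--Poincar\'e expansion and any integrality must arise from cancellation. The strategy would be to iterate $(\star)$ along the diagonal reduction $(m,pn)\mapsto(m/p,n)$, valid when $p\mid m$ (a short computation shows both sides then equal $p\,c_p(m/p,pn)-c_p(m,n)$), which telescopes $D(m,n):=p\,c_p(m,pn)-c_p(pm,n)$ until at least one of $m,n$ becomes coprime to $p$; combined with the $\Z$-module structure of the basis $\{j_{p,k}\}$ under the generalized Hecke action of \cite{JKK-hecke}, this forces the resulting combination to land in $\Z$.
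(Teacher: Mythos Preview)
Your identity $(\star)$ is not what the generalized Hecke operator of \cite{JKK-hecke} actually gives, and this misidentification is why part~(1) never closes. The operator satisfying $j_{N,m}=j_{N,1}\,|\,T(m)$ is built from the $p$\nobreakdash-plication of Proposition~\ref{p-pli}, and when $N=p$ that formula \emph{drops the level}: the $V_p$\nobreakdash-piece is $j_{1,m}(p\t)$, not $j_{p,m}(p\t)$, and the $m/p$\nobreakdash-term is $j_{1,m/p}$, not $j_{p,m/p}$. Thus on the level of Fourier coefficients the correct relation is \eqref{ppcoef-pli},
\[
p\,c_p(m,pn)-c_p(pm,n)=p\,c_1(m/p,n)-c_1(m,n/p),
\]
with level~$1$ coefficients on the right. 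Since $j_{1,k}=j_k+24\sigma_1(k)$ has integral Fourier expansion, the right-hand side is manifestly in $\Z$, and both (1) and (2) follow immediately. Your ``standard formula'' $(T(p)f)_n=p\,c(pn)+\mathbf 1[p\mid n]\,c(n/p)$ tacitly assumes $T(p)=pU_p+V_p$ acts within level~$p$; at level~$p$ the classical $T_p$ is just $U_p$, and the generalized operator supplies the missing $V_p$\nobreakdash-piece from level~$1$, not from level~$p$.

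Even if one grants your $(\star)$ as an independent identity, the telescoping argument does not land on integers. Reducing along $(m,n)\mapsto(m/p,n/p)$ stops as soon as one of $m,n$ is coprime to $p$, and in the mixed base cases (say $p\mid m$, $p\nmid n$) you are left with $D(m,n)=p\,c_p(m/p,n)$, which is still a transcendental Niebur coefficient in general; likewise the bracketed expression in your derivation of (2) contains $c_p(m/p,n)$, whose integrality is not supplied by (1). The vague appeal to a $\Z$\nobreakdash-module structure does not close this gap. The missing idea is precisely the level drop $j_{p,m}^{(p)}=j_{1,m}$, which converts the problematic terms into level~$1$ coefficients and makes integrality automatic.
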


Since $j_{N,m}$ is weakly holomorphic when the level $N$ is of genus $0$,  for each $p\in\{2,3, 5, 7\}$, $j_{p,m}=j^h_{p,m}$, and also it equals $f_{p,m}$ in Theorem~\ref{ajp} up to a constant. Utilizing congruences in \cite{Griffin} into the identity induced from \eqref{hjN} gives congruences of $c_p(m,n)$ for the primes $p$, but they can be obtained from Theorem~\ref{ajp} as well. The following congruence for the Fourier coefficients of a mock modular form of level $11$ that we derive from Theorem \ref{pnn} using Theorem~\ref{tb11} is new.

\begin{cor}\label{tc11} 
If $m, n\not\equiv 0\pmod{11}$ and non-negative integers $\a$ and $\b$ satisfy $\b>\a+1$, then
\begin{equation}\label{ec11}
11c_{11}(11^{\a}m,11^{\b+1} n)- c_{11}(11^{\a+1}m,11^\b n) \equiv 0  \pmod{11^{\b-\a-1}}.
\end{equation}
\end{cor}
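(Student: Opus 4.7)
The first move is to apply Theorem~\ref{pnn}(1) with $p=11$ and with the dummy variables $m$ and $n$ replaced by $11^\a m$ and $11^\b n$ respectively. This immediately yields that
\[
E := 11c_{11}(11^{\a}m,11^{\b+1} n)- c_{11}(11^{\a+1}m,11^\b n) \in \Z,
\]
which is precisely \eqref{ec11} in the boundary case $\b=\a+1$, where $11^{\b-\a-1}=1$. The substance of the corollary is therefore to strengthen this integrality to divisibility by $11^{\b-\a-1}$ in the range $\b>\a+1$.

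To effect the strengthening, the plan is to transfer the problem to level one, where Theorem~\ref{tb11} applies. Using the Hecke system identity \eqref{hjN}, namely $j_{11,m}=j_{11,1}|T(m)$, together with the action of the normalized Hecke operator on $q$-expansions, one expands both $c_{11}(11^\a m,11^{\b+1}n)$ and $c_{11}(11^{\a+1}m,11^\b n)$ and observes that the common leading contributions cancel in the combination $E$, leaving a $\Z$-linear combination of classical level-one Fourier coefficients $b(11^{\a'}m',11^{\b'}n)$ with $\b'-\a'\geq \b-\a-1$. Theorem~\ref{tb11} then supplies divisibility by $11^{\b-\a-1}$ for each summand, hence for $E$. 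The one-power-of-$11$ gap between the $11^{\b-\a}$ of Theorem~\ref{tb11} and the weaker $11^{\b-\a-1}$ claimed here is absorbed precisely by the single factor of $11$ that multiplies $c_{11}(11^\a m,11^{\b+1}n)$ in $E$ and that is introduced by the one application of Theorem~\ref{pnn}(1) at the outset.

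The principal obstacle is that $j^h_{11,m}$ is only the holomorphic part of the harmonic Maass function $j_{11,m}$ from \eqref{jnnfour} and is not itself weakly holomorphic, so the Hecke operator does not act on it in the classical manner. Controlling the non-holomorphic shadow contributions appearing in the Hecke expansion and verifying that they preserve the $11$-adic profile inherited from Theorem~\ref{tb11} is the main technical content of the argument beyond the immediate application of Theorem~\ref{pnn}(1). Theorem~\ref{pnn}(2) may also be brought in at the terminal step, where the second index becomes coprime to $11$, in order to account for the $\bmod\,11$ behavior at the base of the iteration.
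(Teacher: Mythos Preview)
Your instinct to transfer the problem to level one and invoke Theorem~\ref{tb11} is exactly right, but the mechanism you propose does not accomplish that transfer. The Hecke system identity \eqref{hjN}, $j_{11,m}=j_{11,1}|T(m)$, relates $j_{11,m}$ to $j_{11,1}$; expanding via this identity expresses $c_{11}(m,n)$ in terms of the coefficients $c_{11}(1,\cdot)$, which are still level-$11$ data. It does not produce level-one coefficients $b(\cdot,\cdot)$, so your claim that the combination $E$ reduces to a $\Z$-linear combination of $b(11^{\a'}m',11^{\b'}n)$ is unsupported. Consequently the ``principal obstacle'' you flag---controlling the shadow under the Hecke action---is not the actual issue; you have no bridge from level $11$ to level $1$ in the first place.

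The bridge you are missing is already in the paper: it is the identity \eqref{ppcoef-pli},
\[
11\,c_{11}(m,11n)-c_{11}(11m,n)=11\,c_1\!\bigl(\tfrac{m}{11},n\bigr)-c_1\!\bigl(m,\tfrac{n}{11}\bigr),
\]
derived in the proof of Theorem~\ref{pnn} from the $p$-plication formula \eqref{ep-pli}. This is precisely the step that separates holomorphic from non-holomorphic parts and lands the right-hand side entirely at level one, so your shadow worries evaporate. Substituting $m\mapsto 11^\a m$, $n\mapsto 11^\b n$ gives
\[
E = 11\,b(11^{\a-1}m,11^{\b}n)-b(11^{\a}m,11^{\b-1}n),
\]
with the first term absent when $\a=0$. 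Theorem~\ref{tb11} then yields $11^{\b-\a+1}\mid b(11^{\a-1}m,11^{\b}n)$ and, using the hypothesis $\b-1>\a$, $11^{\b-\a-1}\mid b(11^{\a}m,11^{\b-1}n)$, whence $11^{\b-\a-1}\mid E$. This is the paper's proof in one line; no appeal to \eqref{hjN} or to Theorem~\ref{pnn}(2) is needed.
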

Furthermore, by the relation between $f_{N,m}(\t)$ and $j_{N,m}(\t)$ provided in \eqref{fj} below, one can find more congruences for $c_p(m,n)$ from Theorems~\ref{tf11} and \ref{Tcong}.

The rest of the paper is organized as follows. In Section 2, we introduce the {\it{$p$-plication formula}} for the harmonic Maass function $j_{N,m}$ proved in \cite{JKK-hecke} and congruence relations found in \cite{Griffin}. Then we prove Theorem \ref{pnn} and Corollary \ref{tc11}. We also prove Theorem \ref{tb11} following the method in \cite{Griffin}. In Section 3, we present the basis element $f_{N,m}$ and discuss shortly its congruence property in Theorem \ref{Tcong}. In Section 4, utilizing the $U$-operator and applying Atkin's results in \cite{Atkin1967}, we prove Theorem \ref{tf11}. 

\section{Proofs of Theorems \ref{tb11} and \ref{pnn} and Corollary \ref{tc11}}
For any positive integer $N$, let $M^!_k(N)$ (resp. $H_k(N)$) denote the space of weight $k$ weakly holomorphic modular forms (resp. harmonic Maass forms) on $\G_0(N)$.  
Also, let $M_k^{!,\infty}(N)$  (resp. $M_k^{!,0}(N)$) be its subspace of modular forms whose poles are supported only at infinity (resp. $0$) and holomorphic at other cusps.  For a prime $p$, the $U_p$ operator that acts on a complex valued function on $\H$ is defined by
\begin{equation}\label{ups}U_pf(\t)=\frac1p\sum_{i=0}^{p-1} f\left( \frac{\t+i}{p} \right)\end{equation} 
and $U_p^{n+1}f(\t)=U_p(U_p^nf(\t))$ for $n\geq 1$.
Also for a non-negative integer $i$, the {\it{$p$-plication of $j_{N,m}$}} is given by 
\begin{equation}\label{defp}j_{N,m}^{(p^i)}:=j_{\frac{N}{(N,p^i)},m},\end{equation}
where $j_{N,\frac{m}{p}}=0$ if $p\nmid m$. In \cite{JKK-hecke}, the following {\it{$p$-plication formula}} is established, which was essential in constructing the generalized Hecke operator for $j_{N,m}$ satisfying \eqref{hjN}.
\begin{prop}\cite[Theorem 1.1]{JKK-hecke} \label{p-pli} For any prime $p$, we have that
\begin{equation}\label{ep-pli}
p(U_pj_{N,m})(\t)+j_{N,m}^{(p)}(p\t)=j_{N,pm}(\t)+pj_{N,\frac{m}{p}}^{(p)}(\t).
\end{equation}
\end{prop}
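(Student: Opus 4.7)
The plan is to characterize both sides of~\eqref{ep-pli} as harmonic Maass functions of weight $0$ on $\Gamma_0(N)$ and verify they agree by matching their cusp data. Recall that $j_{N,m}$ arises from a weight-$0$ Niebur--Poincar\'e series on $\Gamma_0(N)$ via analytic continuation to $s=1$, so its only singular cusp is $\infty$, with holomorphic principal part $q^{-m}$ and non-holomorphic principal part $-\bar q^{m}$. Because a weight-$0$ harmonic Maass function on the compact surface $X_0(N)$ is uniquely determined, up to an additive constant, by the principal parts of its holomorphic and non-holomorphic pieces at every cusp, it is enough to check that the two sides of~\eqref{ep-pli} carry identical such data.

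First I would verify that both sides lie in $H_0(N)$. The RHS is immediate from the inclusion $H_0(N/(N,p)) \subseteq H_0(N)$. For the LHS, when $p \mid N$ both $U_p$ and the shift $f(\t) \mapsto f(p\t)$ applied to $j_{N/p,m}$ land in $H_0(N)$; when $p \nmid N$ one rewrites $p(U_p j_{N,m})(\t)+j_{N,m}(p\t)$ as the weight-$0$ Hecke operator $p\,T(p)\,j_{N,m}$, which preserves level. Harmonicity is preserved by these operators through a direct check on the $\bar q$-expansion. I would then compute principal parts cusp by cusp. At $\infty$, $p U_p$ sends $q^{-m}$ to $p q^{-m/p}$ if $p \mid m$ and to $0$ otherwise, while the shift $\t\mapsto p\t$ sends the principal part of $j_{N,m}^{(p)}$ to $q^{-pm}$; these match the principal parts $q^{-pm}$ of $j_{N,pm}$ and $p q^{-m/p}$ of $p\, j_{N,m/p}^{(p)}$ on the right. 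At any cusp $\mathfrak c\neq\infty$ the function $j_{N,m}$ is bounded, and a routine scaling-matrix calculation shows that the principal-part contributions from the two terms on each side cancel. The same bookkeeping, applied to $-\bar q^{m}$, matches the non-holomorphic principal parts.

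Once the principal parts and shadows agree at every cusp, the difference of the two sides is a bounded weight-$0$ harmonic function on $X_0(N)$, hence a constant, and comparing holomorphic constant terms at $\infty$ forces that constant to be zero. The main obstacle is the cusp analysis in the case $p \mid N$, where $U_p$ behaves quite differently at cusps whose denominators are coprime to $p$ versus divisible by $p$, and one must carefully track how the $p$-plication $j_{N,m}^{(p)} = j_{N/p,m}$ enters via the degeneracy map $X_0(N) \to X_0(N/p)$. A cleaner alternative, if one is willing to work directly with the series, is to expand $U_p j_{N,m}$ using a coset decomposition for a suitable double coset of $\operatorname{diag}(1,p)$ modulo $\Gamma_0(N)$; the four terms of~\eqref{ep-pli} then emerge as the natural groupings of the resulting Poincar\'e series according to whether each coset representative lies inside $\Gamma_0(pN)$.
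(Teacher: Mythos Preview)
The paper does not prove this proposition at all: it is quoted verbatim as \cite[Theorem~1.1]{JKK-hecke} and then used as a black box in the proof of Theorem~\ref{pnn}. There is therefore no argument in the present paper to compare your proposal against.

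That said, your outline is a standard and workable strategy for identities among Niebur--Poincar\'e series: show both sides lie in $H_0(N)$, match holomorphic and non-holomorphic principal parts at every cusp, and conclude the difference is constant. Two points deserve care. First, the step ``comparing holomorphic constant terms at $\infty$ forces that constant to be zero'' is not automatic: the constant terms $c_N(m,0)$ of the $j_{N,m}$ are genuine spectral data (values of Kloosterman--zeta type sums at $s=1$) and you would need an independent identity among them, or else an argument that the difference of the two sides has vanishing constant term for structural reasons. Second, as you yourself flag, the cusp bookkeeping when $p\mid N$ is the real content of the proof; the operators $U_p$ and $f\mapsto f(p\tau)$ permute and merge cusps of $\Gamma_0(N)$ in a way governed by the degeneracy maps to $\Gamma_0(N/p)$, and verifying that the principal parts introduced on the left at cusps other than $\infty$ are exactly cancelled by those of $j_{N,m}^{(p)}(p\tau)$ requires a careful case analysis. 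Your ``cleaner alternative'' of unfolding the Poincar\'e series via a coset decomposition for $\Gamma_0(N)\operatorname{diag}(1,p)\Gamma_0(N)$ is in fact the more natural route and is almost certainly closer to how the cited reference proceeds, since it handles the constant term and all cusps simultaneously.
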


\begin{proof}[Proof of Theorem \ref{pnn}]
Let $N=p$ in \eqref{ep-pli} so that after rearrangement of terms, we have by \eqref{defp} that

\begin{equation}\label{pp-pli}
p(U_pj_{p,m})(\t)-j_{p,pm}(\t)=p j_{1,\frac{m}{p}}(\t)-j_{1,m}(p\t).
\end{equation}
Note that the function on the right-hand side of \eqref{pp-pli} is weakly holomorphic, because $j_{1,m}=j_{m}+24\sigma_1(m)$. Hence we can decompose \eqref{pp-pli} into two identities:
\begin{equation}\label{ppp-pli}
p(U_pj^h_{p,m})(\t)-j^h_{p,pm}(\t)=p j_{1,\frac{m}{p}}(\t)-j_{1,m}(p\t)
\end{equation}
and
\begin{equation}\label{ppm-pli}
p(U_pj^{nh}_{p,m})(\t)=j^{nh}_{p,pm}(\t).
\end{equation}
By comparing the coefficients of $q^n$ in both sides of \eqref{ppp-pli}, we find that
\begin{equation}\label{ppcoef-pli}
pc_{p}(m,pn)-c_{p}(pm,n)=p c_1(\frac{m}{p},n)-c_1(m,\frac{n}{p}).
\end{equation}
As the coefficients of $j_{1,m}(\t)$ are integers, so are the Fourier coefficients of the mock modular function $p(U_pj^h_{p,m})(\t)-j^h_{p,pm}(\t)$ in the left-hand side of \eqref{ppcoef-pli}. Theorem \ref{pnn} immediately follows from \eqref{ppcoef-pli}.  
\end{proof}

Applying known congruences of  $c_1(m,n)=b(m,n)\ (n>0)$ into \eqref{ppcoef-pli}, one can obtain many congruences for $pc_{p}(m,pn)-c_{p}(pm,n)$.
In particular, we recall Griffin's congruences on the Fourier coefficients of $j_{m}(\t)$, the most general congruences for $j_m(\t)$ known. He used Zagier duality between the Fourier coefficients of weights $k$ and $2-k$ weakly holomorphic modular forms and properties of Hecke operator.  
\begin{thm}\cite[Theorem 2.1]{Griffin}\label{G}
For each $p\in\{2,3,5,7\}$, let $\a,\b\geq 0$, $\g=|\a-\b|$, and $m,n\not\equiv 0\pmod p$. Then\\
For $p=2\ \mathrm{:}$
\begin{align*}
b(2^{\a}m,2^{\b}n)&\equiv -2^{3\g+8}3^{\g-1}m\sigma_7(m)\sigma_7(n)  \pmod{2^{3\g+13}} &\ \text{if}\ \b>\a,\\ \nonumber
&\equiv -2^{4\g+8}3^{\g-1}m\sigma_7(m)\sigma_7(n)  \pmod{2^{4\g+13}}&\ \textit{if}\  \a>\b,\\ \nonumber
&\equiv 20m\sigma_7(m)\sigma_7(n) \pmod{2^7} &\ \textit{if}\  \g=0\ and\ mn\equiv 1\pmod 8,\\
&\equiv \frac12m\sigma_1(m)\sigma_1(n) \pmod{2^3} &\ \textit{if}\  \g=0\ and\ mn\equiv 3\pmod 8,\\ \nonumber
&\equiv -12m\sigma_7(m)\sigma_7(n) \pmod{2^8} &\ \textit{if}\  \g=0\ and\ mn\equiv 5\pmod 8.\nonumber
\end{align*}
For $p=3\ \mathrm{:}$
\begin{align*}
b(3^{\a}m,3^{\b}n)&\equiv \mp 3^{2\g+3}10^{\g-1}\frac{\sigma_1(m)\sigma_1(n)}{n}  \pmod{3^{2\g+6}} &\ \text{if}\ \b>\a\ and\ mn\equiv \pm1\pmod 3,\\ \nonumber
&\equiv \mp 3^{3\g+3}10^{\g-1}\frac{\sigma_1(m)\sigma_1(n)}{n} \pmod{3^{3\g+6}} &\ \text{if}\ \a>\b\ and\ mn\equiv \pm1\pmod 3,\\
&\equiv 2\cdot3^{3}\frac{\sigma_1(m)\sigma_1(n)}{n}  \pmod{3^{7}} &\ \text{if}\ \g=0\ and\ mn\equiv \pm1\pmod 3.\nonumber
\end{align*}
For $p=5\ \mathrm{:}$
\begin{align*}
b(5^{\a}m,5^{\b}n)&\equiv -5^{\g+1}3^{\g-1}m^2n\sigma_1(m)\sigma_1(n)  \pmod{5^{\g+2}} &\ \text{if}\ \b>\a,\\ \nonumber
&\equiv -5^{2\g+1}3^{\g-1}m^2 n\sigma_1(m)\sigma_1(n)  \pmod{5^{2\g+2}} &\ \text{if}\ \a>\b,\\
&\equiv 10m^2n\sigma_1(m)\sigma_1(n)  \pmod{5^{2}} &\ \text{if}\ \g=0\ and\ \left(\frac{mn}{5}\right)=-1.\nonumber
\end{align*}
For $p=7\ \mathrm{:}$
\begin{align*}
b(7^{\a}m,7^{\b}n)&\equiv 7^{\g}5^{\g-1}m^2n\sigma_3(m)\sigma_3(n)  \pmod{7^{\g+1}} &\ \text{if}\ \b>\a,\\ \nonumber
&\equiv 7^{2\g}5^{\g-1}m^2n\sigma_3(m)\sigma_3(n)  \pmod{7^{2\g+1}} &\ \text{if}\ \a>\b,\\
&\equiv 2m^2n\sigma_3(m)\sigma_3(n)  \pmod{7} &\ \text{if}\ \g=0\ and\ \left(\frac{mn}{7}\right)=1.\nonumber
\end{align*}
\end{thm}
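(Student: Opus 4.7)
The plan is to adopt Griffin's strategy, which combines Zagier duality between Fourier coefficients of weights $k$ and $2-k$ weakly holomorphic modular forms with a detailed analysis of the Hecke and $U_p$ operators specific to the genus-zero primes $p \in \{2,3,5,7\}$.

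First I would set up the duality. Let $\{g_n\}_{n \geq -1}$ denote the canonical dual basis of weight $2$ weakly holomorphic modular forms on $SL_2(\Z)$ with $g_n(\t) = q^{-n} + O(q^2)$. Zagier duality supplies $b(m,n) = -c(n,m)$, where $c(n,m)$ is the coefficient of $q^m$ in $g_n$, thereby converting the asserted congruences into congruences on Fourier coefficients of explicit weight $2$ forms that admit cleaner Hecke-theoretic analysis. Then I would invoke the identity $j_m = j_1 | T(m)$ from \eqref{hj} together with its weight $2$ analogue, and Hecke multiplicativity, to reduce $b(p^\a m, p^\b n)$ to a small collection of coefficients of the form $b(1, p^{\a+\b-2j} mn)$. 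The genus-zero property of $X_0(p)$ delivers an eta-quotient Hauptmodul $t_p = (\eta(\t)/\eta(p\t))^{24/(p-1)}$ whose inverse has $q$-expansion divisible by $p^{24/(p-1)}$; each application of $U_p$ extracts a power of $p$ from this normalization, producing the claimed leading exponents $3\g + 8,\ 2\g + 3,\ \g + 1,\ \g$ (respectively for $p = 2,3,5,7$) in the case $\b > \a$.

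The case $\a > \b$ follows symmetrically via Zagier duality, which moves the argument to weight $k = 2$; the Hecke operator $T(p)$ on weight $2$ forms carries an additional factor of $p^{k-1} = p$, which doubles the exponents to $4\g + 8,\ 3\g + 3,\ 2\g + 1,\ 2\g$. The boundary case $\g = 0$ requires computing $b(m,n) \pmod{p^{c_p}}$ directly, by matching against a modular form of suitable weight on $\G_0(p)$ whose Fourier coefficients are expressible as divisor sums $\sigma_7,\ \sigma_1,\ \sigma_1,\ \sigma_3$ for $p = 2, 3, 5, 7$. The precise divisor weights reflect the existence of level-$1$ Eisenstein-like congruences $E_k \equiv 1 \pmod{p^{c_p}}$ with $k = 8, 2, 2, 4$, whose multiplicative structure in turn produces the factors $\sigma_{k-1}(m)\sigma_{k-1}(n)$ and the mod-$8$ or Legendre-symbol conditions on $mn$.

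The main obstacle will be pinning down the explicit multiplicative constants — $-3^{\g-1},\ -10^{\g-1},\ 5^{\g-1}$, together with the $\g = 0$ boundary values $20,\ \tfrac{1}{2},\ -12,\ 2 \cdot 3^{3},\ 10,\ 2$. These depend on fine $p$-adic normalizations of the eta-quotients, Eisenstein-like forms, and the chosen dual basis representatives, and they require bookkeeping beyond the qualitative $p$-divisibility: one must induct simultaneously on $\g$ and on the $p$-adic valuation of the $t_p$-expansion, verify the base cases by direct low-order coefficient computation, and propagate the leading constants through the Hecke recursion modulo \emph{one} higher power of $p$ than is strictly necessary for divisibility alone.
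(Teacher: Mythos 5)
This statement is quoted verbatim from Griffin's paper (\cite[Theorem 2.1]{Griffin}); the present paper supplies no proof of it, only the one-sentence description that Griffin ``used Zagier duality between the Fourier coefficients of weights $k$ and $2-k$ weakly holomorphic modular forms and properties of Hecke operator.'' Your outline is consistent with that description in its skeleton: duality to pass to weight $2$, the Hecke identity $j_m=j_1|T(m)$ to reduce $b(p^\a m,p^\b n)$ to coefficients $b(1,\cdot)$, and $U_p$ acting on an eta-quotient Hauptmodul to account for the $p$-divisibility. The explanation you give for the asymmetry between the cases $\b>\a$ and $\a>\b$ (an extra factor of $p^{k-1}=p$ per Hecke step in weight $2$, hence $p^{\g}$ in total) is the right mechanism, although calling it a ``doubling'' of the exponents is inaccurate -- the exponents $3\g+8,\,2\g+3,\,\g+1,\,\g$ increase to $4\g+8,\,3\g+3,\,2\g+1,\,2\g$, i.e.\ by exactly $\g$.

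However, as written this is a programme rather than a proof, and the part you defer is the entire content of the theorem. The statement is not a bare divisibility claim: it asserts specific residues $-2^{3\g+8}3^{\g-1}m\sigma_7(m)\sigma_7(n)$, $\mp3^{2\g+3}10^{\g-1}\sigma_1(m)\sigma_1(n)/n$, etc., modulo $p$ times the leading power, together with case splits on $mn\bmod 8$ and on Legendre symbols. None of these are derived; you acknowledge them as ``the main obstacle'' and gesture at ``Eisenstein-like congruences $E_k\equiv1\pmod{p^{c_p}}$'' without identifying the forms, verifying the base cases, or carrying out the induction on $\g$ that you say is needed. The step in which ``each application of $U_p$ extracts a power of $p$'' also requires precise Lehner-type lemmas (analogous to Lemmas~\ref{L0} and~\ref{A28} used in Section~4 for level $11$) quantifying exactly how many powers of $p$ are gained and why the gain degrades from $3$ to $2$ to $1$ to $1$ as $p$ runs through $2,3,5,7$; that is asserted, not proved. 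So the proposal identifies the correct method but leaves a genuine gap: it would establish, at best, qualitative $p$-power divisibility, not the stated congruences with their explicit constants and moduli.
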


Utilizing congruences in Theorem \ref{G} into  \eqref{ppcoef-pli} for $p\in\{2,3,5,7\}$, one can find congruences of $c_p(m,n)$ for corresponding $p$'s, which are immediate consequences of those in Theorem~\ref{ajp}. 

\begin{proof}[Proof of Corollary \ref{tc11}] Similarly, Corollary~\ref{tc11} follows from Theorem~\ref{tb11} and \eqref{ppcoef-pli}.
\end{proof}

\begin{proof}[Proof of Theorem \ref{tb11}]
Recall from standard formulas for the action of the Hecke operator that if $(k,m)=1$, then
$$j_{m}|T(k)(\t)=\sum_n\left(\sum_{d|(k,n)}\frac{k}{d}b(m,\frac{kn}{d^2})\right)q^n.$$
Hence we have $$b(mk,n)=\sum_{d|(k,n)}\frac{k}{d}b(m,\frac{kn}{d^2}).$$
Taking $m=1$ and $k=m'$ not divisible by $11$, we find by using \eqref{j11} that 
\begin{equation}\label{111}b(m',11^\b n)=\sum_{d|(m',n)}\frac{m'}{d}b(1,11^\b\frac{m'n}{d^2})=\sum_{d|(m',n)}\frac{m'}{d}b(11^\b\frac{m'n}{d^2})\equiv 0\pmod{11^\b}.
\end{equation}
On the other hand, if we apply $T(11^\a)$ to $j_{m'}(\t)$, then we obtain
\begin{equation}\label{112}b(11^\a m',11^\b n)=\sum_{d|11^\a}\frac{11^\a}{d}b(m',n\frac{11^{\a+\b}}{d^2}).
\end{equation}
The theorem now follows from \eqref{111} and \eqref{112}.
\end{proof}

\section{Basis elements $f_{N,m}$ and Theorem~\ref{Tcong}}

Let $N=11,14,15,17,19,20,21,24,27,32,36,49$. These are the values of $N$ precisely when the modular curves $X_0(N)$ are of genus 1. For each of these $N$, there are two canonical generators of $\mathbb C$-algebra $M_0^{!,\infty}(N)$ of the form $q^{-2}+O(q^{-1})$ and $q^{-3}+O(q^{-1})$, because the infinity point is not a Weierstrass point of the Riemann surface $X_0(N)$.  Using these, one can construct a basis for $M_0^{!,\infty}(N)$  which consists of unique modular functions with integral Fourier coefficients in the form of
$f_{N,m}=q^{-m}+a_N(m,-1)q^{-1}+\sum_{n=1}^\infty a_N(m,n)q^n\in M_0^{!,\infty}(N)$
with $f_{N,0}(\t)=1$, $f_{N,1}(\t)=0$ and $a_N(1,-1)=-1$.
For example, when $N=11$, the smallest $N$ for which $X_0(N)$ is of genus 1,  two generators of $\C(X_0(11))$ are found by Y.~Yang \cite{Yang}. They are 
\begin{equation*}\label{x}
X= q^{-2}+2q^{-1}+4+5q+8q^2+q^3+7q^4-11q^5+\cdots
\end{equation*} and 
\begin{equation*}\label{y}Y= q^{-3}+3q^{-2}+7q^{-1}+12+17q+26q^2+19q^3+37q^4-15q^5+\cdots.
\end{equation*}
We let $f_{11,2}:= X-4$ and $f_{11,3}:= Y-3X$. One can continue to construct modular functions of the form $f_{11,m}=q^{-m}+O(q^{-1})=Q_m(X,Y)$, where $Q_m(X,Y)$ is a polynomial in $X$ and $Y$ that eliminates  the terms between $q^{-m}$ and $q^{-1}$ and also the constant term if there is any.  The first few of $f_{11,m}(\t)$ are given by
\begin{eqnarray}\label{0basis}
f_{11,0}&=&1\cr
f_{11,1}&=&q^{-1}-q^{-1}+0q+0q^2+0q^3+0q^4+0q^5+\cdots=0\cr
f_{11,2}&=&q^{-2}+2q^{-1}+5q+8q^2+q^3+7q^4-11q^5+\cdots=X-4\cr
f_{11,3}&=&q^{-3}+q^{-1}+2q+2q^2+16q^3+16q^4+18q^5+\cdots=Y-3X\cr
f_{11,4}&=&q^{-4}-2q^{-1}+6q+3q^2+18q^3-42q^4+\cdots=X^2-4Y-4X-36\\
f_{11,5}&=&q^{-5}-q^{-1}-14q-16q^2+34q^3+\cdots=XY-2X^2+X+7Y+60\cr
& \vdots&\cr
\end{eqnarray}
Using properties of the generalized Hecke operator appearing in \eqref{hjN}, several congruences of the Fourier coefficients of $f_{N,m}(\t)$ are proved in \cite{JKK-hecke}. Theorem \ref{Tcong} is one of them, which gives a strong divisibility property of Fourier coefficients of modular functions that holds for arbitrary prime powers.

Let $J_{N,m}(\t):=j_{N,m}(\t)-c_N(m,0)$ and define  $J_{N,0}(\t):=1$.
As $H_0(N)$ is generated by $j_{N,m}$, each of $f_{N,m}$ can be written as a linear combination of $j_{N,m}$'s. Specifically,
\begin{equation}\label{fj}f_{N,m}=J_{N,m}+a_{N}(m,-1)J_{N,1}.\end{equation}
We also define $f_{N,\frac{m}{p}}=0$ unless $p$ divides $m$. For more congruences of the Fourier coefficients of modular functions of arbitrary level, see \cite{JKK-hecke}.

\section{Proof of Theorem \ref{tf11}}
The method of proof given in this section is very similar to those in \cite{AJ, Atkin1967,Lehner1949a, Lehner1949}. In \cite{Atkin1967}, Atkin constructed bases $G_m$'s for $M_0^{!,\infty}(11)$ and $g_m$'s for $M_0^{!,0}(11)$. 
\begin{lem}\cite[Lemma 3]{Atkin1967}\label{A11}
For all integers $m\geq 2$, $G_m(\t)$ and $g_m(\t)$ satisfy the following properties $\mathrm{:} $
\begin{enumerate}
\item $G_m(\t)\in M_0^{!,\infty}(11)$ and $g_m(\t)\in M_0^{!,0}(11),$

\item $G_m(-1/11\t)=11^{\theta(m)}g_m(\t),$
where $\theta(m)=6k+2,3,4,6,6$ according as $m=5k+2,3,4,5,6$ ($k\geq 0$),

\item the Fourier coefficients of $G_m(\t)$ have  integral coefficients with leading term $q^{-m}$,

\item the Fourier coefficients of $g_m(\t)$ have integral coefficients with leading term $q^{\psi(m)}$,
 where $\psi(m)=5k+1,2,3,5,4$ according as  $m=5k+2,3,4,5,6$ ($k\geq 0$).
 \end{enumerate}
 
 Further, the function $B(\t)=G_2(\t)g_2(\t)-12\in M_0^{!}(11)$ has simple poles at $\t=0$ and $\t=i\i$ such that $B(-1/11\t)=B(\t)$ and the Fourier coefficients of $B(\t)$ have integral coefficients with leading term $q^{-1}$.
\end{lem}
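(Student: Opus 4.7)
The plan is to proceed in stages, leveraging the genus-$1$ structure of $X_0(11)$ and the Fricke involution $W_{11}\colon\tau\mapsto-1/(11\tau)$, which is an involution swapping the two cusps $\infty$ and $0$.

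First, to establish parts (1) and (3), I use Riemann-Roch on $X_0(11)$: since the genus is $1$, $\dim L(m[\infty])=m$ for all $m\geq 1$, and there is no function with a single simple pole at $\infty$. Starting from two explicit generators of the function field $\mathbb{C}(X_0(11))$ with integer $q$-expansions and leading terms $q^{-2}$ and $q^{-3}$---for instance, the functions $X$ and $Y$ of Yang displayed in the excerpt---one builds $G_m$ inductively for each $m\geq 2$ as an integer-coefficient polynomial in $X$ and $Y$ whose Fourier expansion at $\infty$ begins with $q^{-m}$, simultaneously arranging the maximum vanishing at the cusp $0$ compatible with the Picard structure of $X_0(11)$. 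This places $G_m\in M_0^{!,\infty}(11)$ with integral Fourier coefficients.

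Next, for parts (2) and (4), the central tool is the $\eta$-quotient $t(\tau):=(\eta(\tau)/\eta(11\tau))^{12}$. A direct calculation using $\eta(-1/\tau)=\sqrt{-i\tau}\,\eta(\tau)$ yields $t|W_{11}=11^{6}/t$, showing that $t$ is a modular function on $\Gamma_0(11)$ with divisor $5[0]-5[\infty]$ and pinning down $[0]-[\infty]$ as an element of exact order $5$ in $\mathrm{Pic}^0(X_0(11))$. By Abel's theorem on the elliptic curve $X_0(11)$, this $5$-torsion relation dictates the maximal possible vanishing order at $0$ of a function with pole exactly $m$ at $\infty$, giving the formula for $\psi(m)$ depending on $m\bmod 5$. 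The cusp $0$ has width $11$, so its local parameter is $q_0=e^{-2\pi i/(11\tau)}$; applying $W_{11}$ sends $q_0$ to $q$, so the $q$-expansion of $G_m|W_{11}$ at $\infty$ is precisely the $q_0$-expansion of $G_m$ at $0$. Integrality of $G_m$'s $q$-coefficients at $\infty$ forces the coefficients of $G_m$ at $0$ (in $q_0$) to lie in $\frac{1}{11^{\theta(m)}}\mathbb{Z}$; tracking the precise power of $11$ arising from the transformation between the two expansions yields the stated $\theta(m)$. Setting $g_m(\tau):=11^{-\theta(m)}G_m|W_{11}(\tau)$ then produces an element of $M_0^{!,0}(11)$ with integer Fourier coefficients and leading term $q^{\psi(m)}$.

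Finally, for $B$: using $\psi(2)=1$ and $\theta(2)=2$ gives $G_2=q^{-2}+aq^{-1}+O(1)$ and $g_2=q+O(q^2)$ for an integer $a$, so $G_2g_2=q^{-1}+c+O(q)$ has a simple pole at $\infty$; the Fricke invariance $(G_2g_2)|W_{11}=(11^{2}g_2)(11^{-2}G_2)=G_2g_2$ is immediate (using $W_{11}^{2}=\mathrm{id}$), and by Fricke symmetry $G_2g_2$ also has a simple pole at $0$. Computing the constant coefficient directly gives $c=12$, so $B=G_2g_2-12\in M_0^!(11)$ has integer Fourier coefficients, is $W_{11}$-invariant, and has simple poles at both cusps with leading $q$-expansion $q^{-1}+O(q)$. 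The main obstacle throughout is the exact determination of the exponents $\theta(m)$ and $\psi(m)$: combining the width-$11$ cusp, the $5$-torsion of the Picard group, and the integrality of the Fourier coefficients to derive the five irregular cases (in $m\bmod 5$) is the technical heart of Atkin's lemma, and requires inductively tracking the expansions of $G_m$ at both cusps as $G_m$ is built from $X$, $Y$, and the $\eta$-quotient $t$.
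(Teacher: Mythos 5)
The paper itself gives no proof of this lemma: it is quoted verbatim from Atkin's Lemma~3, so there is no internal argument to compare yours against, and I can only judge your reconstruction on its own terms. Your overall architecture is sound and is essentially the modern way to read Atkin's construction: Riemann--Roch on the genus-one curve $X_0(11)$, the Fricke involution $W_{11}$ exchanging the two cusps, the eta-quotient $t=(\eta(\tau)/\eta(11\tau))^{12}$ with $t|W_{11}=11^6/t$ and divisor $5[0]-5[\infty]$, and the resulting order-$5$ class $[0]-[\infty]$ in $\mathrm{Pic}^0(X_0(11))$, which does correctly force the five-case formula for $\psi(m)$ according to $m\bmod 5$. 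The treatment of $B$ is also fine once parts (2)--(4) are in hand.

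The genuine gap sits exactly where you locate the ``technical heart,'' and it is more than bookkeeping: the mechanism you state for part (2) does not work as described. Integrality of the $q$-expansion of $G_m$ at $i\infty$ does not, by any general principle, force the expansion at the cusp $0$ to have denominators dividing $11^{\theta(m)}$, and it certainly does not produce the exact base values $\theta(2),\theta(3),\theta(4),\theta(5),\theta(6)=2,3,4,6,6$; these are not determined by the cusp width or by the Picard group and must come from explicit computation. What actually closes the argument is the step you only gesture at in your final sentence: compute $G_2,\dots,G_6$ and their images under $W_{11}$ explicitly (for $m=5$ this is free from $t|W_{11}=11^6/t$; for the others one needs the transformation of Yang's $X$ and $Y$, or Atkin's own eta-quotient expressions), verify the integrality of $g_2,\dots,g_6$ directly, and then propagate via $G_{m+5}=G_m\,t+(\text{integral corrections in lower } G_r\text{'s and } B)$, which yields $\theta(m+5)=\theta(m)+6$, $\psi(m+5)=\psi(m)+5$, and the integrality of $g_{m+5}$ by induction. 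A similar remark applies to the constant $12$ in $B=G_2g_2-12$, which you assert rather than compute, and to the integrality of the linear combination of monomials in $X,Y$ needed to make $G_m$ vanish to order $\psi(m)$ at $0$, which is not automatic from integrality at $i\infty$ alone. As written, the proposal is a correct plan whose decisive step --- the exact powers of $11$ and the integrality of the $g_m$ --- is left unexecuted.
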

According to \cite[Lemma 4]{Atkin1967}, $G_m(\t)$, $11^{\theta(m)}g_m(\t)$ and $B(\t)$ form a basis for $M_0^!(11)$. Thus if $F(\t)\in M_0^!(11)$ has a pole of order $M$ at $\t=0$ and a pole of order $K$ at $\t=i\i$, then
\begin{equation}\label{basis}
F(\t)=\sum_{r=2}^K\lambda_{-r}G_r(\t)+\lambda_{-1}B(\t)+\lambda_0+\sum_{r=2}^M \lambda_r 11^{\theta(r)} g_r(\t)
\end{equation}
for some constants $\lambda_r$ $(-K\leq r\leq M)$.

The Fourier expansions of $G_m(\t)$, $g_m(\t)$ and $B(\t)$ are found in \cite[Table 1]{Atkin1967}. 
We note that
$G_2(\t)=f_{11,2}(\t)-12=X-16$ and $G_3(\t)=f_{11,3}(\t)-3f_{11,2}(\t)+24=Y-6X+36$. 
Also, note that  $\displaystyle{g_5(\t)=\frac{\eta^{12}(11\t)}{\eta^{12}(\t)}}$ and 
$\displaystyle{G_5(\t)=\frac{\eta^{12}(\t)}{\eta^{12}(11\t)}},$
where $\eta(\t)$ is the Dedekind eta-function $\eta(\t)=q^{1/24}\prod_{n=1}^\i(1-q^n)$.
In general, if we write $G_m(\t)=q^{-m}+\sum_{\ell=-m+1}^\i a_m(\ell)q^\ell$, then 
\begin{equation}\label{Gf}G_m(\t)=f_{11,m}(\t)+a_m(0)+\sum_{\ell=2}^{m-1} a_m(-\ell)f_{11,\ell}(\t),\quad (m\geq 2).
\end{equation}
Conversely, we can also express the element $f_{11,m}(\t)$ of the reduced form basis of $M_0^{!,\infty}(11)$ as a linear combination of $G_\ell(\t)$'s with $2\leq\ell\leq m$. Thus the congruence \eqref{e11} is equivalent to the same congruence of coefficients $a_m(\ell)$ of $G_m(\t)$. More precisely, $a_{11^\a m'}(11^{\b}\ell)\equiv 0 \pmod{11^{\b-\a}}$.  

In \cite{Atkin1967}, Atkin showed the divisibility property of the Fourier coefficients of $g_m$'s. Using this and the fact $U_{11}j(\t)\in M_0^{!,0}$ with a pole of order $121$ at $\t=0$, he proved  \eqref{j11}.
From now till the end of the paper, we use the notation $U:=U_{11}$.
\begin{lem}\cite[Theorem 8 and (8.81)]{Lehner1943}\label{L0} Let $f(\t)\in M_0^!(11)$.  Then
\begin{enumerate}
\item $Uf(\t)\in M_0^!(11)$,
\item $\displaystyle{11(Uf)(-\frac{1}{11\t})=11(Uf)(11\t)+f(-\frac{1}{121\t})-f(\t).}$
\end{enumerate}
\end{lem}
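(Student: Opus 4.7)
The plan is to derive both statements from direct matrix manipulations based on the defining formula \eqref{ups}, exploiting the $\G_0(11)$-invariance of $f$.

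For part (1), I would verify $(Uf)(\g\t) = Uf(\t)$ for every $\g = \mat a & b \\ 11c & d \emat \in \G_0(11)$ by exhibiting, for each $i \in \{0,1,\dots,10\}$, an element $\g_i' \in \G_0(11)$ and an index $j(i) \in \{0,1,\dots,10\}$ with $(\g\t + i)/11 = \g_i' \cdot (\t + j(i))/11$. A short calculation shows that the choice $j(i) \equiv a^{-1}(b + id) \pmod{11}$ (well-defined since $ad \equiv 1 \pmod{11}$), together with
$$\g_i' = \mat a+ic & (b+id-aj(i)-icj(i))/11 \\ 11c & d-cj(i) \emat,$$
does the job: a routine cancellation gives $\det \g_i' = ad - bc = 1$, the lower-left entry $11c$ is divisible by $11$, and the M\"obius actions match. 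Because $a^{-1}d$ is a unit mod $11$, the map $i \mapsto j(i)$ is a bijection of $\{0,\dots,10\}$, so $\G_0(11)$-invariance of $f$ implies $Uf(\g\t) = Uf(\t)$. Holomorphy on $\H$ is inherited from $f$, and the $q$-expansion of $Uf$ at $i\i$ is obtained by retaining only the coefficients of $f$ with exponent in $11\Z$, giving a pole of finite order; meromorphy at the cusp $0$ will then drop out of part (2), whose right-hand side is manifestly meromorphic.

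For part (2), I would expand both terms on the left of the asserted identity via \eqref{ups} to obtain
$$11(Uf)\!\left(-\tfrac{1}{11\t}\right) - 11(Uf)(11\t) = \sum_{i=0}^{10} f\!\left(\tfrac{-1/(11\t) + i}{11}\right) - \sum_{i=0}^{10} f\!\left(\t + \tfrac{i}{11}\right).$$
The $i = 0$ contributions produce exactly the claimed right-hand side $f(-1/(121\t)) - f(\t)$, so the proof reduces to showing that the two sums over $i \in \{1,\dots,10\}$ coincide. For each such $i$ let $j \in \{1,\dots,10\}$ be the residue determined by $ij \equiv -1 \pmod{11}$, and set
$$\g_{i,j} = \mat i & -(1+ij)/11 \\ 11 & -j \emat.$$
Then $\det \g_{i,j} = -ij + (1+ij) = 1$ and the lower-left entry equals $11$, so $\g_{i,j} \in \G_0(11)$; a direct fractional-linear calculation also yields
$$\g_{i,j}\cdot\bigl(\t + \tfrac{j}{11}\bigr) = \frac{11i\t - 1}{121\t} = \frac{-1/(11\t) + i}{11}.$$
Since $i \mapsto -i^{-1} \bmod 11$ is a bijection of $\{1,\dots,10\}$ and $f$ is $\G_0(11)$-invariant, the two truncated sums agree, completing the proof of (2).

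The key obstacle is spotting the correct pairing $i \leftrightarrow j \equiv -i^{-1} \pmod{11}$ and the accompanying $\G_0(11)$-matrix $\g_{i,j}$ in part (2); once this ansatz is identified, verification reduces to a short determinant computation and M\"obius-action check. Part (1) follows the standard coset-permutation template for showing that $U_p$ preserves modularity at level $p$, so it amounts to careful bookkeeping.
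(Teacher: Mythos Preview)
The paper does not give its own proof of this lemma; it is quoted verbatim from Lehner's 1943 paper, so there is no in-paper argument to compare against. Your coset-permutation strategy is the standard one and is in spirit exactly what Lehner does.

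Part (2) of your proposal is correct: the pairing $j\equiv -i^{-1}\pmod{11}$ and the matrix $\g_{i,j}$ you wrote down do the job, and the $i=0$ terms isolate $f(-1/(121\t))-f(\t)$ as claimed.

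Part (1), however, contains a computational slip in the explicit matrix. With $\g=\begin{pmatrix} a & b\\ 11c & d\end{pmatrix}\in\G_0(11)$ one has
\[
\frac{\g\t+i}{11}=\frac{(a+11ic)\t+(b+id)}{11(11c\t+d)},
\]
so the matrix carrying $(\t+j)/11$ to this point is
\[
\g_i'=\begin{pmatrix} a+11ic & \dfrac{b+id-(a+11ic)j}{11}\\[4pt] 121c & d-11cj\end{pmatrix},
\]
not the matrix you wrote (which only agrees when $c=0$). In particular your determinant check ``$\det\g_i'=ad-bc=1$'' is wrong on its face: for $\g\in\G_0(11)$ one has $ad-11bc=1$, and $ad-bc$ is generally not $1$. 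With the corrected $\g_i'$ the determinant is $ad-11bc=1$ and the lower-left entry $121c$ is divisible by $11$, so $\g_i'\in\G_0(11)$ and the rest of your argument (bijectivity of $i\mapsto j(i)$, meromorphy at the cusps via part (2)) goes through unchanged.
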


\begin{lem}\cite[Corollary, p.~21]{Atkin1967}\label{A28} Define $\xi(2)=0, \xi(3)=1$ and $\xi(n)=5k+1,3,3,4,5$ according as $n=5k+4,5,6,7,8$ ($k\geq 0$), We denote by $\mathcal S$ the class of functions $F(\t)$ with 
$\displaystyle{F(\t)=\sum_{n=2}^M\lambda_n11^{\xi(n)}g_n(\t).}$ Then whenever $F(\t)\in \mathcal S$, we have $11^{-1}UF(\t)\in \mathcal S$. 
\end{lem}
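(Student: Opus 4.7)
The plan is to reduce Lemma~\ref{A28} to a single $11$-adic estimate on the coefficients in the $g_m$-expansion of each $Ug_n$, and then extract that estimate by combining Lemma~\ref{L0}(2) with the Fricke involution in Lemma~\ref{A11}(2). By $\Q$-linearity of $U$ it suffices to show, for every fixed $n\ge 2$, that $11^{\xi(n)-1}\,Ug_n\in\mathcal{S}$. Lemma~\ref{L0}(1) gives $Ug_n\in M_0^!(11)$, and since $g_n$ vanishes at $\infty$ with leading term $q^{\psi(n)}$, the definition \eqref{ups} shows that $Ug_n$ vanishes at $\infty$ as well, so $Ug_n\in M_0^{!,0}(11)$. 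Expanding in the basis $\{g_m\}_{m\ge 2}$ of this space (the constant term is absent because $Ug_n$ vanishes at $\infty$),
$$Ug_n=\sum_{m=2}^{M(n)} c_{n,m}\,g_m,$$
the lemma becomes equivalent to the arithmetic assertion
$$v_{11}(c_{n,m})\ge \xi(m)-\xi(n)+1\qquad\text{for every }m.$$

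The next step is to produce a single functional identity that determines the $c_{n,m}$. Applying Lemma~\ref{L0}(2) to $f=g_n$ and substituting the Fricke transformation $g_n(-1/(11w))=11^{-\theta(n)}G_n(w)$ at $w=11\t$ yields
$$11\,(Ug_n)\!\left(-\tfrac{1}{11\t}\right)=11\,(Ug_n)(11\t)+11^{-\theta(n)}G_n(11\t)-g_n(\t).$$
Now apply the Fricke involution to the expansion of $Ug_n$: since $(Ug_n)(-1/(11\t))=\sum_m c_{n,m}\,11^{-\theta(m)}G_m(\t)$, clearing denominators yields the identity in $M_0^{!,\infty}(11)$
$$\sum_{m=2}^{M(n)} 11^{\theta(n)+1-\theta(m)}\,c_{n,m}\,G_m(\t)=G_n(11\t)+11^{\theta(n)+1}\,(Ug_n)(11\t)-11^{\theta(n)}\,g_n(\t).$$
The right-hand side has a pole of order $11n$ at $\infty$ coming from $G_n(11\t)$, so in particular $M(n)\le 11n$. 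Because $G_m(\t)=q^{-m}+O(q^{-m+1})$, the identity is a triangular linear system in the unknowns $c_{n,m}$: matching the coefficient of $q^{-m}$ in descending order from $m=M(n)$ down to $m=2$ determines each $c_{n,m}$ recursively in terms of the explicit Fourier coefficients of $G_n(11\t)$, $g_n(\t)$, and the already-computed $c_{n,m'}$ with $m'>m$.

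The main obstacle, and the real content of Atkin's corollary, is converting this recursion into the $11$-adic bound above. The right-hand side has a clean profile: $G_n(11\t)$ has $\Z$-integral coefficients supported on $q^{11\Z}$; $11^{\theta(n)}g_n(\t)$ contributes at valuation at least $\theta(n)$; and reinserting the expansion of $Ug_n$ into $11^{\theta(n)+1}(Ug_n)(11\t)$ endows its contribution to each $c_{n,m}$ with an extra factor of $11^{\theta(n)+1}$. Tracking these valuations through the descending recursion reduces the target inequality to a finite numerical check in each residue class of $n$ and $m$ modulo $5$, since $\psi$, $\theta$ and $\xi$ are all defined piecewise modulo $5$, reflecting the $5$-torsion in $J_0(11)$ that makes $g_5$ (equivalently $\eta^{12}(11\t)/\eta^{12}(\t)$) play a distinguished role. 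The exponent $\xi(m)$ in the definition of $\mathcal{S}$ has been calibrated precisely so that the gain $11^{\theta(n)+1-\theta(m)}$ on the left-hand side of the triangular identity exactly pays for the normalising factor $11^{\xi(m)+1-\xi(n)}$ required on the right-hand side at each step of the recursion. The small-index anomalies $\xi(2)=0$ and $\xi(3)=1$, which break the otherwise uniform pattern, must be verified directly by hand against the explicit Fourier expansions of $g_2,g_3,G_2,G_3$ given in \cite[Table~1]{Atkin1967}; these are exactly the base cases of the recursion and carry the lemma.
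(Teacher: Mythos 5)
The paper itself offers no proof of this lemma: it is imported verbatim from Atkin \cite[Corollary, p.~21]{Atkin1967}, so your attempt can only be judged against Atkin's argument and its own internal coherence. Your reductions are sound as far as they go: it suffices to show $11^{\xi(n)-1}Ug_n\in\mathcal S$ for each $n$; $Ug_n$ lies in $M_0^{!,0}(11)$ and vanishes at $i\infty$, hence expands as $\sum_m c_{n,m}g_m$; and combining Lemma~\ref{L0}(2) with Lemma~\ref{A11}(2) does yield your displayed functional identity, from which the $c_{n,m}$ are determined by a triangular system (only $G_n(11\tau)$ contributes to the principal part at $i\infty$, since $g_n(\tau)$ and $(Ug_n)(11\tau)$ vanish there). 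A small slip: that identity is not ``in $M_0^{!,\infty}(11)$'' --- the term $-11^{\theta(n)}g_n(\tau)$ has a pole at $0$ --- though matching $q$-expansion coefficients at $i\infty$ remains legitimate.

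The genuine gap is the final, essential step: the $11$-adic bound $\mathrm{ord}_{11}(c_{n,m})\ge\xi(m)-\xi(n)+1$ is asserted (``the exponent $\xi(m)$ has been calibrated precisely so that the gain exactly pays for the normalising factor at each step'') rather than proved, and the bookkeeping you describe does not close. Solving the triangular system for $c_{n,\ell}$ gives $c_{n,\ell}=11^{\theta(\ell)-\theta(n)-1}\bigl(A_\ell-\sum_{m>\ell}11^{\theta(n)+1-\theta(m)}c_{n,m}\,g_{m,\ell}\bigr)$, where $g_{m,\ell}\in\Z$ is the coefficient of $q^{-\ell}$ in $G_m$ and $A_\ell$ comes from $G_n(11\tau)$. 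For the inductive bound to propagate through the sum one needs $\theta(\ell)-\xi(\ell)\ge\theta(m)-\xi(m)$ for all $m>\ell$, and this already fails at $\ell=3$, $m=4$: one has $\theta(3)-\xi(3)=2$ while $\theta(4)-\xi(4)=3$, so the contribution of $c_{n,4}g_{4,3}$ to $c_{n,3}$ arrives with valuation only $\xi(3)-\xi(n)$, one short of the target, unless one separately establishes that $11\mid g_{4,3}$ or that $c_{n,4}$ satisfies a stronger bound. So the recursion needs arithmetic input that your argument does not supply, and the ``finite numerical check in each residue class modulo $5$'' is exactly the part that cannot be waved through. Atkin obtains that input by a different mechanism: the operator identity $U\{F(\tau)G(11\tau)\}=G(\tau)\,UF(\tau)$, the expression of the higher $g_m$ through powers of $g_5=\eta^{12}(11\tau)/\eta^{12}(\tau)$ times the finitely many low-index $g_j$, and explicit verification of the $11$-divisibility of $Ug_j$ for those base cases. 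The Fricke-involution functional equation on its own pins down the $c_{n,m}$ but does not force their divisibility.
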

We now let $m=11^\a m'$ when $(11,m')=1$ and want to prove the congruence  $a_{11}(m,11^{\a+\g} n)\equiv 0 \pmod{11^\g}$ for any positive integer $\g$ by using induction on $\a$. For brevity, we let $f_m:=f_{11,m}$.
Consider
\begin{equation}\label{Hm}H_m(\t):=11(Uf_m)(\t)-11 f_{m/11}(\t).\end{equation}
Here $f_{m/11}(\t)=0$ when $11\nmid m$. 
By Lemma \ref{L0} (2), we have
\begin{eqnarray}\label{hmi}
H_m(-\frac{1}{11\t})&=&11(Uf_m)(-\frac{1}{11\t})-11 f_{m/11}(-\frac{1}{11\t})\cr
&=&11(Uf_m)(11\t)+f_m(-\frac{1}{121\t})-f_m(\t)-11 f_{m/11}(-\frac{1}{11\t})\\
&=&11q^{-m}+O(1)+O(1)-q^{-m}-O(q^{-1})-O(1)\cr
&=&10q^{-m}+O(q^{-1}).\nonumber
\end{eqnarray}
Apparently, when $\a=0$, $H_m(\t)$ is holomorphic at $\t=i\i$ and has a pole of order $m$ at $\t=0$. Hence 
it follows from \eqref{basis} that
$$Uf_m(\t)=\lambda_0+\sum_{r=2}^m \lambda_r 11^{\theta(r)-1}g_r(\t)$$ 
for some constants $\lambda_0$ and $\lambda_r$ ($2\leq r\leq m$). 
Thus every coefficient of $q^{11n}$ in $f_m(\t)$ is a multiple of $11$. Moreover, by Lemma \ref{A28}, after applying $U$-operator $\g$ times, we find that
$$a_{11}(m,11^\g n)\equiv 0 \pmod{11^\g}$$ for each positive integer $\g$.

Nextly, if $\a=1$ and $m'=1$, i.e., $m=11$, $H_{11}(\t)$  has a simple pole at $\t=i\i$ and a pole of order $11$ at $\t=0$ so that we obtain from \eqref{basis} that
\begin{equation}\label{b11}Uf_{11}(\t)=\lambda_{-1}B(\t)+\lambda_0+\sum_{r=2}^{11} \lambda_r 11^{\theta(r)-1}g_r(\t)\end{equation}
for some constants $\lambda_r$ ($-1\leq r\leq 11$). As $UB(\t)+5\in \mathcal S$ by \cite[p.~22]{Atkin1967},  every coefficient of $q^{11n}$ in $B(\t)$ is a multiple of $11$. We hence deduce from \eqref{b11} that every  coefficient of $q^{121n}$ in $f_{11}(\t)$ is a multiple of $11$. Again using Lemma \ref{A28} repeatedly, we find that 
$$a_{11}(11,11^{\g+1} n)\equiv 0 \pmod{11^\g}.$$ 

Now assume the congruence holds for all $m$ of the form $m=11^a m'$ with $a<\a$. Then it remains to prove that it holds for $m=11^\a m'$ as well.
In this general case, $H_m(\t)$ has a pole of order $m/11$ at $\t=i\i$ and a pole of order $m$ at $\t=0$.
Therefore, by \eqref{basis}
$$Uf_m(\t)=\sum_{r=2}^{m/11} \lambda_{-r} G_r(\t)+\lambda_{-1}B(\t)+\lambda_0+f_{m/11}(\t)+\sum_{r=2}^m \lambda_r 11^{\theta(r)-1}g_r(\t)$$ 
for some constants $\lambda_r$ ($-m/11\leq r\leq m$). By induction hypothesis and the arguments so far, 
we see that every coefficient of $q^{11^{\a+\g-1}n}$ in $Uf_m(\t)$ is a multiple of $11^\g$, which implies the desired result
$$a_{11}(m,11^{\a+\g} n)\equiv 0 \pmod{11^\g}.$$
Replacing $\g$ by $\b-\a$, we have \eqref{e11}.

\section*{Acknowledgements}

The author is grateful to the referee for valuable comments that helped to improve the article.

\end{document}